\theoremstyle{plain}
\newtheorem{main}{Theorem}
\newtheorem{theorem}{Theorem}[section]
\newtheorem{lemma}[theorem]{Lemma}
\newtheorem{proposition}[theorem]{Proposition}
\newtheorem{corollary}[theorem]{Corollary}
\theoremstyle{remark}
\newtheorem{remark}[theorem]{Remark}
\newcommand{\Leb}{\operatorname{vol}}
           \def\ea{\end{array}}
          \def\ec{\end{center}}
     \def\ed{\end{description}}
        \def\ee{\end{equation}}
       \def\eea{\end{eqnarray}}
     \def\eeaa{\end{eqnarray*}}
 \def\et{\end{thebibliography}}
\def\Diff{{\rm Diff}}
\def\Gibb{{\rm Gibb}}
\def\inte{{\rm int}}
\def\supp{\operatorname{supp}}
\def\cC{{\mathcal C}}
\def\cK{{\mathcal K}}
\def\cI{{\mathcal I}}
\def\cU{{\mathcal U}}
\def\cB{{\mathcal B}}
\def\cH{{\mathcal H}}
\def\cF{{\mathcal F}}
\def\cP{{\mathcal P}}
\title{Partially volume expanding diffeomorphisms}
\author{Shaobo Gan, Ming Li, Marcelo Viana and Jiagang Yang}
\date{\today}
\thanks{S.G. is supported by NSFC 11231001 and NSFC 11771025. M.L. is supported by NSFC 11571188. M.V. and J.Y. were partially supported by CNPq, FAPERJ, and PRONEX.
This work was supported by the Fondation Louis D-Institut de France (project coordinated by M. Viana)}
\address{School of Mathematical Sciences, Peking University, Beijing 100871, China}
\email{gansb\@@pku.edu.cn}
\address{School of Mathematical Sciences and LPMC, Nankai University, Tianjin 300071, People's Republic of China}
\email{limingmath\@@nankai.edu.cn}
\address{IMPA, Est. D. Castorina 110, 22460-320 Rio de Janeiro, Brazil}
\email{viana\@@impa.br}
\address{Departamento de Geometria, Instituto de Matem\'atica e Estat\'\i stica, Universidade Federal Fluminense, Niter\'oi, Brazil}
\email{yangjg\@@impa.br}
\begin{document}

\begin{abstract}
We call a partially hyperbolic diffeomorphism \emph{partially volume expanding} if the Jacobian restricted to any hyperplane
that contains the unstable bundle $E^u$ is larger than $1$. This is a $C^1$ open property.
We show that any $C^{1+}$ partially volume expanding diffeomorphisms admits finitely many physical measures, the union of whose
basins has full volume. %We also provide a class of solenoid-type attracting sets that are partially volume expanding.
\end{abstract}

\maketitle

\tableofcontents

\setcounter{tocdepth}{1} \tableofcontents

\section{Introduction}
Let $f:M\to M$ be a diffeomorphism on some compact Riemannian manifold $M$.
An invariant probability $\mu$ of $f$ is a \emph{physical measure} if the set of points $z\in M$ for which
\begin{equation}\label{eq.basin}
\frac{1}{n}\sum_{i=0}^{n-1}\delta_{f^i(x)}\to \mu \text{(in the weak$^*$
sense)}
\end{equation}
has positive volume. This set is denoted by $\cB(\mu)$ and called the \emph{basin} of $\mu$.

In the present paper, we investigate the existence of existence and finiteness of physical measures in the
setting of partially hyperbolic diffeomorphisms. More precisely, we assume that there exists an
splitting $TM = E^u \oplus E^{cs}$ of the tangent bundle that is invariant under the tangent map $Df$ and
satisfies
\begin{equation}\label{eq.dfpartially}
\|(Df\mid_{E^{u}_x})^{-1})\| < 1 \text{ and } \|(Df\mid_{E^{u}_x})^{-1})\| \|Df \mid_{E^{cs}_x}\| < 1  \text{ at every } x \in M.
\end{equation}
In other words, the \emph{unstable bundle} $E^u$ is uniformly expanding, and it \emph{dominates}
the \emph{center-stable bundle} $E^{cs}$.

A program
for investigating the physical measures of partially hyperbolic diffeomorphisms was initiated by Alves, Bonatti, Viana
in \cite{BoV00,ABV00}. Their starting point was the observation that physical measures must be Gibbs $u$-states,
a notion they borrowed from Pesin, Sinai~\cite{PS82}. Not all Gibbs $u$-states are physical measures, but that is
easily seen to be the case for those Gibbs $u$-states whose center stable Lyapunov exponents are all negative.
Bonatti, Viana~\cite{BoV00} introduced the notion of \emph{mostly contracting center}, and proved that under this condition
there are finitely many ergodic Gibbs $u$-states, all with negative center Lyapunov exponents, and they are the
physical measures. Moreover, the union of their basins is a full volume set.

Further results on physical measures of diffeomorphims with mostly contracting center have been
obtained by Dolgopyat~\cite{Dol00}, Castro~\cite{Cas02}, Burns, Dolgopyat, Pesin~\cite{BDP02},
De Simoi, Liverani~\cite{DSL16}, Dolgopyat, Viana, Yang~\cite{DVY16} and others.

Here we take a different, although related viewpoint. One key new observation  (Theorem~\ref{t.limit} below) is that,
for any $C^1$ diffeomorphism, every physical measure $\mu$ must be \emph{volume non-expanding} in the sense that
$$
\int_M \log|\det Df| \, d\mu \le 0.
$$
Thus, going back to the partially hyperbolic setting, we may restrict our analysis to volume non-expanding Gibbs
$u$-states. We say that the diffeomorphism $f$ is \emph{partially volume expanding} if
$$
\left|\det Df(x) \mid_{H}\right| > 1
$$
for any codimension-one subspace $H$ of $T_xM$ that contains $E_x^u$.

Being partially volume expanding is clearly a $C^1$ open property (the corresponding statement for mostly contracting
cente is more subtle, and was proven by Andersson~\cite{An10} and Yang~\cite{Yang-partial}).
Moreover, it is not difficult to find examples. For instance, let $S^1$ be the circle and $D$ be the $2$-dimensional disk.
Then consider an embedding $f_0: M \to M$ of the solid torus $M=S^1\times D$ of the form
\begin{equation}\label{eq_solenoid}
f_0(\theta,x)=\left(k\theta \mod 1, h_\theta(x)\right)
\end{equation}
where $k \ge 3$ is an integer larger and $h_\theta(x)$ is such that $\|Dh_\theta(x)\|$ and $\|Dh_\theta(x)^{-1}\|$
are both strictly less than $k$ at every point.
As we will check later, the first condition implies that $f_0$ is partially hyperbolic, and the second one ensures
that it is partially volume expanding.

\begin{main}\label{main.global}
Any partially volume expanding $C^{1+}$ diffeomorphism admits finitely many physical measures, the union of whose basins is
a full volume subset of the ambient manifold.
\end{main}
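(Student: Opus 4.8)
The plan is to reduce the statement, via the established theory of Gibbs $u$-states, to a single inequality on center Lyapunov exponents, and to extract that inequality from the partially volume expanding hypothesis together with Theorem~\ref{t.limit}. Recall from the program of \cite{ABV00,BoV00} that every physical measure of $f$ is a Gibbs $u$-state, and that, conversely, an ergodic Gibbs $u$-state all of whose center-stable Lyapunov exponents are negative is a physical measure. Thus the heart of the matter is to show that the \emph{observable} ergodic Gibbs $u$-states are precisely those with negative center-stable exponents, and that there are only finitely many of them.

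The key new step is the following. Let $\mu$ be an ergodic Gibbs $u$-state which is volume non-expanding, that is, $\int_M\log|\det Df|\,d\mu\le 0$; by Theorem~\ref{t.limit} this holds for every physical $\mu$. I claim every center-stable exponent of $\mu$ is negative. Since $E^u$ is uniformly expanding and dominates $E^{cs}$, at $\mu$-almost every $x$ the Oseledets subspaces inside $E^u_x$ carry the top exponents while $E^{cs}_x$ carries the rest; write $\lambda^{cs}_{\max}(\mu)$ for the largest exponent along $E^{cs}$. Removing a one-dimensional direction realizing $\lambda^{cs}_{\max}(\mu)$ produces a measurable field of codimension-one subspaces $H_x\supset E^u_x$. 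The hypothesis gives $\log\left|\det Df(x)\mid_{H_x}\right|>0$ at every point, so its $\mu$-average is positive; on the other hand, by the multiplicative ergodic theorem this average equals $\int_M\log|\det Df|\,d\mu-\lambda^{cs}_{\max}(\mu)$. Combining with $\int_M\log|\det Df|\,d\mu\le 0$ forces $\lambda^{cs}_{\max}(\mu)<0$. Hence, for ergodic Gibbs $u$-states, being volume non-expanding, having all center-stable exponents negative, and being physical are all equivalent.

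Next I would show that Lebesgue-almost every point is captured by such a measure. By the absolute continuity of the strong-unstable foliation and the Gibbs property, Lebesgue-almost every $x$ lies in the basin $\cB(\nu)$ of some ergodic Gibbs $u$-state $\nu$: volume disintegrates along strong-unstable leaves into densities equivalent to the $u$-state conditionals, and almost every point of almost every leaf is generic for an ergodic component. If $\nu$ were volume expanding, with $\int_M\log|\det Df|\,d\nu=c>0$, then $|\det Df^n(x)|\to\infty$ on $\cB(\nu)$, whereas $\int_B|\det Df^n|\,d\Leb=\Leb(f^n(B))\le\Leb(M)$ for every measurable $B$; by Fatou the set $\cB(\nu)$ would then be Lebesgue-null. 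Therefore, for almost every $x$, the measure $\nu$ is volume non-expanding, hence physical with negative center-stable exponents by the previous paragraph.

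The remaining, and I expect hardest, ingredient is \emph{finiteness}: there are only finitely many ergodic Gibbs $u$-states with all center-stable exponents negative. Here I would invoke the now-standard technology for Gibbs $u$-states with contracting center developed in \cite{BoV00} and refined in \cite{DVY16,Yang-partial}: negative center-stable exponents yield, through Pesin theory, center-stable manifolds which, together with the uniform strong-unstable manifolds, give a local product structure on a positive-measure block; the $u$-state densities then force the basins to have a definite size and to be essentially disjoint, and a compactness and upper-semicontinuity argument bounds their number. Assembling the pieces, the finitely many physical measures so obtained have basins whose union carries full volume. The main obstacle is exactly this finiteness together with the almost-everywhere genericity used above, both of which rest on the heavier Gibbs $u$-state machinery; the genuinely new contribution of the partially volume expanding hypothesis is confined to the short exponent computation of the second paragraph.
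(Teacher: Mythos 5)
Your overall strategy --- characterize physical measures among ergodic Gibbs $u$-states as the volume non-expanding ones, show those have negative center exponents, then prove finiteness and full-volume basins --- is the same as the paper's, and your Fatou argument that a volume-expanding ergodic measure has Lebesgue-null basin is a correct rendering of part of Theorem~\ref{t.limit}. But both places where you defer to ``standard machinery'' are genuine gaps, because the machinery you cite does not deliver those claims. First, the assertion that Lebesgue-almost every $x$ lies in the basin of \emph{some} ergodic Gibbs $u$-state is unjustified: Proposition~\ref{p.Gibbsustates}(3) only says that every \emph{accumulation point} of the empirical measures is a Gibbs $u$-state; since Lebesgue measure is not invariant, a.e.\ points need not be Birkhoff-generic for anything, their empirical measures need not converge, and limits need not be ergodic. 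Moreover volume does not disintegrate into $u$-state conditionals globally --- the union of the supports of all Gibbs $u$-states can have zero volume. This claim is essentially the theorem itself; the paper proves it in Lemma~\ref{l.fullvolume} by pushing forward leaf volume on an unstable disk inside the putative bad set, extracting a volume non-expanding ergodic component $\tilde\mu$ of a limit Gibbs $u$-state, and then using Pesin stable manifolds, absolute continuity of the stable lamination, and a $cs$-block from \cite{ViY13} to force a positive-volume piece of the bad set into $\cB(\tilde\mu)$. Second, finiteness cannot be imported from \cite{BoV00,DVY16}: those results assume mostly contracting center, i.e.\ that \emph{all} ergodic Gibbs $u$-states have negative center exponents, which is not available here (that is Theorem~\ref{main.relation}, and only for $u$-codimension $\le 2$). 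Isolation of each physical measure among ergodic Gibbs $u$-states (Lemma~\ref{l.simple_fact2}) does not bound their number: the product of a linear Anosov map with a Newhouse surface diffeomorphism has infinitely many ergodic Gibbs $u$-states with negative center exponents, each isolated. The paper's Lemma~\ref{l.finite} closes this by re-invoking the hypothesis \emph{at the limit}: a weak$^*$ accumulation point $\mu$ of the physical measures is again volume non-expanding (each $\mu_j$ is, since its basin meets $\Gamma$), hence has an ergodic component that is physical with negative exponents and supported in $\supp\mu$, whose stable manifolds then absorb the $\mu_j$ for large $j$ --- a contradiction. Your sketch never applies the partial volume expansion hypothesis to limit measures, and without that step the argument fails.

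There is also a flaw in your ``key new step''. The identity $\int\log\left|\det Df(x)\mid_{H_x}\right|d\mu=\int\log|\det Df|\,d\mu-\lambda^{cs}_{\max}(\mu)$ requires the field $x\mapsto H_x$ to be $Df$-invariant, so that the one-step Jacobians telescope into $\det Df^n\mid_{H_x}$ and Birkhoff's theorem applies. If the Oseledets space carrying $\lambda^{cs}_{\max}(\mu)$ has dimension at least $2$, an invariant \emph{measurable} choice of a codimension-one subspace inside it need not exist (for instance when the cocycle acts on that space by irrational rotations), so ``removing a one-dimensional direction'' produces a measurable field but generally not an invariant one, and for a non-invariant field the integral has no relation to the asymptotic growth rate. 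The paper avoids this entirely: fix at a $\mu$-generic point a \emph{single} hyperplane $H\supset E^u_x$ realizing the growth rate $\Delta(x)-\lambda_i(x)$; its forward images $Df^n(x)H$ remain hyperplanes containing $E^u$, so by compactness of the set of such pairs $(x,H)$ each one-step Jacobian is at least $e^c$ with a uniform $c>0$, whence $\Delta(x)-\lambda_i(x)\ge c$, which is \eqref{eq.used2}. Note that once invariance is lost, the uniform constant $c$ (not just pointwise positivity of the Jacobian) is what yields strict negativity of the center exponents, which is indispensable for the Pesin-theoretic steps that follow.
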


Several other results on the physical measures of partially hyperbolic maps have been obtained, especially in the setting
of maps \emph{mostly expanding cente} that was introduced by Alves, Bonatii, Viana~\cite{ABV00}.
Andersson, V\'asquez~\cite{AnV18} use a slightly stronger definition, which they prove is $C^2$ open.
The latter was improved by Yang~\cite{Yang-partial}, who proved $C^1$ openness. Bifurcation properties of the
physical measures have been studied by Andersson, V\'asquez~\cite{AnV} and Yang~\cite{Yang-expanding}

The main novelty of our present results, with respect to those more studied cases, is perhaps that the
notion of partial volume expansion requires no assumptions on the signs of the Lyapunov exponents:
it allows us to focus on the more significant Gibbs $u$-states, with negative center exponents,
and disregard all the other ones.

This comes with a price: while the conclusion of Theorem~\ref{t.limit}
remains true for nearby maps, just because the assumptions are $C^1$ open, we have no control on how the
number of physical measures unfolds under perturbation. This is in contrast with the case of diffeomorphisms
with mostly contracting center, where a very precise bifurcation theory for physical measures exists
(see \cite{DVY16,HYY,Yang-partial}) that describes the number and supports of physical measures for all the
perturbations of an initial map.

We call \emph{$u$-codimension} of a partially hyperbolic diffeomorphism the dimension of its center sub-bundle.
It is clear from the definition that any partially hyperbolic diffeomorphism with $u$-codimension 1 is
partially volume expanding. The theorem that follows provides a new way to establish the property of mostly
contracting center. In particular, it implies that the generalized solenoid $f_0:M \to M$ presented above
has mostly contracting center, which was not known previously.

\begin{main}\label{main.relation}
Every partially volume expanding $C^{1+}$ diffeomorphism that has $u$-codimension less than or equal to 2
has mostly contracting center.
\end{main}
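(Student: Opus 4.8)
The plan is to prove that \emph{every} ergodic Gibbs $u$-state $\nu$ has all of its center Lyapunov exponents negative; by the characterization of mostly contracting center in terms of Gibbs $u$-states, this is exactly the conclusion (and it suffices to treat ergodic ones, since ergodic components of Gibbs $u$-states are again Gibbs $u$-states). Write $d=\dim E^{cs}$ for the $u$-codimension, so $d\le 2$; let $\lambda^c_1\ge\cdots\ge\lambda^c_d$ be the center exponents of $\nu$ counted with multiplicity, and set $\lambda^u=\int_M\log|\det(Df\mid_{E^u})|\,d\nu$ for the sum of the unstable exponents, so that $\lambda^u>0$ because $E^u$ is uniformly expanding. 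I will argue by contradiction, assuming $\lambda^c_1\ge 0$.

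First I would translate partial volume expansion into Lyapunov data. Since $Df$ carries any codimension-one $H\supseteq E^u_x$ to a codimension-one subspace of $T_{f(x)}M$ containing $E^u_{f(x)}$, the quantity $\psi_n(x):=\min\{\log|\det(Df^n\mid_H)| : H\supseteq E^u_x,\ \operatorname{codim}H=1\}$ is superadditive, and partial volume expansion together with compactness of $M$ gives $\psi_1\ge c_0>0$. Kingman's theorem then yields $\lim_n\frac1n\psi_n\ge c_0>0$ for $\nu$-a.e.\ $x$. Bounding $\psi_n$ from above at a regular point by taking $H$ adapted to the $d-1$ slowest singular directions of $Df^n\mid_{E^{cs}}$ shows $\lim_n\frac1n\psi_n\le\lambda^u+\sum_{i\ge 2}\lambda^c_i$, whence
\begin{equation}
\lambda^u+\lambda^c_2+\cdots+\lambda^c_d\ \ge\ c_0\ >\ 0.\tag{$\star$}
\end{equation}
In words: the unstable expansion plus all center exponents \emph{except the largest} is strictly positive; this is the only place the hypothesis enters.

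The second ingredient is an entropy estimate. Because $\nu$ is a $C^{1+}$ Gibbs $u$-state its conditionals along strong unstable manifolds are absolutely continuous, so the entropy formula along the unstable foliation gives $h_\nu(f)\ge\lambda^u$. On the other hand, all negative exponents of $f$ lie in the center (as $E^u$ is uniformly expanding), so the positive exponents of $f^{-1}$ are precisely $\{-\lambda^c_i:\lambda^c_i<0\}$; applying Ruelle's inequality to $f^{-1}$ and using $h_\nu(f)=h_\nu(f^{-1})$ yields
\begin{equation}
\lambda^u\ \le\ h_\nu(f)\ \le\ -\!\!\sum_{\lambda^c_i<0}\!\lambda^c_i,\qquad\text{i.e.}\qquad \lambda^u+\sum_{\lambda^c_i<0}\lambda^c_i\ \le\ 0.\tag{$\dagger$}
\end{equation}

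Finally I would collide $(\star)$ and $(\dagger)$ using $d\le 2$. Under $\lambda^c_1\ge 0$ the only center exponent that can be negative is $\lambda^c_2$ (and none if $d=1$). If $\lambda^c_2\ge 0$ (or $d=1$) the sum in $(\dagger)$ is empty, forcing $\lambda^u\le 0$, which is absurd. If $\lambda^c_2<0$ then $(\dagger)$ reads $\lambda^u+\lambda^c_2\le 0$, directly contradicting $(\star)$, which for $d=2$ says $\lambda^u+\lambda^c_2>0$. Either way the assumption $\lambda^c_1\ge 0$ fails, so all center exponents of $\nu$ are negative and $f$ has mostly contracting center. I expect the main obstacle to be the clean proof of $(\star)$---identifying the minimal hyperplane Jacobian growth with ``dropping the top center exponent'', which requires care with Oseledets multiplicities and the superadditive limit---together with the unstable entropy bound $h_\nu(f)\ge\lambda^u$ in the $C^{1+}$ category. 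It is worth emphasizing that the hypothesis $d\le 2$ is used exactly so that the negative-exponent sum in $(\dagger)$ coincides with the truncated sum in $(\star)$; for $d\ge 3$ positive exponents among $\lambda^c_2,\dots,\lambda^c_d$ would decouple the two inequalities, which is presumably why the statement is sharp at $u$-codimension $2$.
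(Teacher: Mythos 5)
Your proposal is correct and follows essentially the same route as the paper's proof: your inequality $(\star)$ is precisely the paper's consequence \eqref{eq.used2} of partial volume expansion (the paper derives it directly from the Oseledets theorem applied to hyperplanes containing $E^u$, rather than via Kingman, but the content and the underlying invariance observation are identical), and your $(\dagger)$ combines Ledrappier's unstable entropy bound for Gibbs $u$-states with Ruelle's inequality applied to $f^{-1}$, exactly as in the paper. The final collision of the two inequalities under the assumption $\lambda^c_1\ge 0$, including the remark on why $u$-codimension $\le 2$ is exactly what makes the truncated sum in $(\star)$ match the negative-exponent sum in $(\dagger)$, coincides with the paper's argument.
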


In particular, any partially hyperbolic diffeomorphism with $u$-codimension 1 has mostly contracting center.
This does not seem to have been pointed out before.
It is easy to find examples (e.g. Smale solenoids) of $u$-codimension 2 partially hyperbolic diffeomorphisms
that have mostly contracting center, and yet fail to be partially volume expanding.
Although we ere not able to produce a counterexample, we believe that Theorem~\ref{main.relation}
does not extend to $u$-codimension 3.

In Section~\ref{local} we extend these ideas to a semi-local setting, namely to
partially hyperbolic attracting sets of embeddings $M\to\inte(M)$ of compact
manifolds with boundary, including the generalized solenoid $f_0:M \to M$ in
\eqref{eq_solenoid}, and its perturbations. Note that our conditions on $f_0$
are a lot more flexible than in the usual construction of the Smale
solenoid~\cite{Shu87,Sma67}: in particular, they include many non-hyperbolic examples. In fact, it is clear that  the same ideas can be applied to more general solenoids,
including the natural extensions of expanding maps on (possibly branched)
manifolds studied by Williams~\cite{Wil74}, Bonatti, Pumari\~{n}o, Viana~\cite{BPV00}
and Du~\cite{Du13}. We are grateful to Bin Yu for pointing this out to us.

\section{Preliminaries}

In this section, we collect a number of classical notations and facts that are useful for our arguments.
We start by reinterpreting the partial volume expansion assumption.

\subsection{Partial volume expansion}

Let $f:M\to M$ be a $C^1$ diffeomorphism and $\mu$ be an invariant probability measure.
For $\mu$-almost every $x$, let $l=l(x)\ge 1$ and $\lambda_1(x) > \cdots > \lambda_l(x)$ be the Lyapunov exponents and
$$
T_xM = E_x^1 \oplus \cdots \oplus E_x^l
$$
be the Oseledets splitting, given by the Oseledets multiplicative ergodic theorem (Oseledets~\cite{Ose68},
see \cite[Theorem~4.2]{LLE}).

Denote $\Delta(x) = \sum_{i=1}^l \lambda_i(x) \dim E_x^i$. The Oseledets theorem also gives that
\begin{equation}\label{eq.Delta}
\lim_n \frac 1n \log \left|\det Df^n(x)\right| = \Delta(x)
\end{equation}
for $\mu$-almost every $x$. Similarly, for any hyperplane $H \subset T_x M$ there exists $i$ such that
\begin{equation}\label{eq.used1}
\lim_n \frac 1n \log \left|\det Df^n(x) _{\mid H}\right| = \Delta(x)-\lambda_i(x).
\end{equation}
It is equally clear that, for any given $i\in\{i, \dots, l\}$ a hyperplane $H$ such that this identity holds.

Now take $f\in\Diff^1(M)$ to admit a partially hyperbolic $T_x M = E^u \oplus E^{cs}$.
Then there exists $k = k(x) \in\{1, \dots, l-1\}$ such that
$$
E^u(x)=E^1_x\oplus \cdots \oplus E^k_x
\text{ and }
E^{cs}(x)=E^{k+1}_x \oplus \cdots \oplus E^l_x.
$$
We refer to $\lambda_{k+1}(x), \dots, \lambda_l(x)$ as the \emph{center Lyapunov exponents} at $x$.

The partial volume expansion assumption means that $\log\left|\det Df(x)_{\mid H} \right|$ is positive
for any $x\in M$ and any hyperplane $H$ containing the unstable subspace $E_x^u$.
Since the set of such pairs $(x,H)$ is compact (keep in mind that $x \mapsto E_x^u$ is continuous),
we get that there exists $c>0$ such that
\begin{equation}\label{eq.used3}
\log\left|\det Df(x)_{\mid H} \right| \ge c
\text{ for any $x\in M$ and any hyperplane $H\supset E_x^u$.}
\end{equation}
As the set of such pairs $(x,H)$ is also invariant under iteration, it follows that $c$ is a lower
bound for the limit in \eqref{eq.used1}. Thus, our assumption implies that
\begin{equation}\label{eq.used2}
\Delta(x) - \lambda_i(x) \ge c \text{ for $\mu$-almost every $x\in M$ and any $i\in\{k+1, \dots, l\}$}
\end{equation}

\subsection{Gibbs $u$-states}

Following Pesin and Sinai~\cite{PS82} and Bonatti and Viana~\cite{BoV00} (see also~\cite[Chapter 11]{Beyond}),
we call \emph{Gibbs $u$-state} any invariant probability measure whose conditional probabilities
along strong unstable leaves are absolutely continuous with respect to the Lebesgue measure on the leaves.

It is shown in \cite[Section 11.2]{Beyond} (see also Dolgopyat~\cite{Dol04a}) that every physical measure
of a $C^{1+}$ partially hyperbolic diffeomorphism is a Gibbs $u$-state. A partial converse is true:
any ergodic Gibbs $u$-state whose center Lyapunov exponents are all negative is a physical measure.

Let $\Gibb^u(f)$ denote the space of all Gibbs $u$-states. By an \emph{unstable disk} we mean any
embedded disk contained in some unstable leaf of $f$.
By \emph{empirical measures} of a point $x\in M$ we mean any accumulation point of the sequence
${n}^{-1}\sum_{i=0}^{n-1}\delta_{f^i(x)}$.
The proofs of the following basic properties can also be found in~\cite[Section 11.2]{Beyond}:

\begin{proposition}\label{p.Gibbsustates}
Let $f$ be a $C^{1+}$ partially hyperbolic diffeomorphism. Then
\begin{itemize}
\item[(1)] $\Gibb^u(f)$ is non-empty, weak$^*$ compact and convex. Ergodic components of Gibbs $u$-states are Gibbs u-states.
\item[(2)] The support of every Gibbs $u$-state is $\cF^u$-saturated, that is, it consists of entire unstable leaves.
\item[(3)] For Lebesgue almost every point $x$ in any unstable disk, every empirical measure $\nu_x$ is a Gibbs $u$-state.
\item[(4)] For every unstable disk $D^u$, any weak$^*$ limit of the sequence of measures
$$
\frac{1}{n}\sum_{i=1}^n (f^i)_* (\Leb_{D^u})
$$
is a Gibbs $u$-state.
\end{itemize}
\end{proposition}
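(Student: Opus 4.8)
The plan is to base everything on a single analytic input: the \emph{bounded distortion} estimate for the unstable Jacobian $J^uf(x)=|\det Df(x)_{\mid E^u_x}|$. Because $f$ is $C^{1+}$, the map $x\mapsto \log J^uf(x)$ is H\"older along unstable leaves, and the uniform expansion of $E^u$ lets one sum the H\"older moduli over backward iterates along a plaque. First I would record the consequence I actually use: if $D^u$ is an unstable disk and I disintegrate each pushforward $(f^n)_*(\Leb_{D^u})$ along the unstable plaques of a small foliation box, then the resulting conditional densities with respect to Lebesgue on the plaques are bounded above and below by constants $C^{\pm 1}$ that do not depend on $n$ or on the box. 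This uniform two-sided bound is the engine for all four items.

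With this in hand, items (4) and (3) are the core. For (4) I would set $\mu_n=\frac1n\sum_{i=1}^n(f^i)_*(\Leb_{D^u})$, pass to a weak$^*$ convergent subsequence $\mu_n\to\mu$ in the compact space of probability measures, and check $f$-invariance by the usual Krylov--Bogolyubov telescoping. The substantive point is that $\mu$ is a Gibbs $u$-state. I would prove this locally: fix a foliation box, disintegrate $\mu$ along its unstable plaques, and show the conditionals are absolutely continuous. The mechanism is that each $\mu_n$ has conditional densities in $[C^{-1},C]$ by the distortion bound, this class of measures is weak$^*$ closed inside the box, and a compactness argument on the densities (together with convergence of the transverse quotient measures) transfers the two-sided bound to the limit; in particular the conditionals of $\mu$ are equivalent to Lebesgue on the plaques. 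Item (3) is the pointwise refinement of (4), via the identity $\frac1n\sum_i(f^i)_*(\Leb_{D^u})=\int_{D^u}\bigl(\frac1n\sum_i\delta_{f^i(x)}\bigr)\,d\Leb_{D^u}(x)$; I would combine (4) with a measurable-selection/Fubini argument along $D^u$ to conclude that, for Lebesgue-a.e.\ $x$, every accumulation point $\nu_x$ of the empirical averages inherits the absolutely continuous conditionals, hence lies in $\Gibb^u(f)$.

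The remaining items are then formal. For (1): non-emptiness is immediate from (4) applied to any unstable disk; convexity follows from the definition once one checks that the disintegration of a convex combination is the corresponding combination of disintegrations on a common foliation box; weak$^*$ compactness amounts to closedness of the Gibbs $u$-property inside the compact set of invariant measures, which is exactly the limit argument used in (4); and the statement about ergodic components follows by disintegrating the ergodic decomposition consistently with the unstable foliation, so that a.e.\ component again has absolutely continuous conditionals. For (2): $\supp\mu$ is closed and $f$-invariant, and since the conditionals of a Gibbs $u$-state are equivalent to Lebesgue on the plaques, every point of $\supp\mu$ carries an entire unstable plaque inside $\supp\mu$; iterating $f$ and using the expansion of $E^u$, together with taking closures, upgrades these plaques to entire unstable leaves.

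The hard part will be the ``passing absolute continuity to the weak$^*$ limit'' step in (4). Weak$^*$ convergence of the $\mu_n$ gives no a priori control of their conditional measures, so one cannot simply take limits of densities; the uniform bound in $[C^{-1},C]$ from bounded distortion is precisely what rescues the argument, letting one extract a limit density by weak$^*$ compactness in $L^\infty$ of a plaque while simultaneously controlling the transverse quotient measures. Handling this cleanly requires some care with the measurable structure of the unstable foliation, whose holonomies are continuous but in general only H\"older rather than smooth; keeping the foliation boxes and their disintegrations compatible across the limit is where the real work lies.
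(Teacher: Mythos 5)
Your outline matches, in architecture, the proof the paper actually points to: the paper does not prove this proposition itself but cites \cite[Section~11.2]{Beyond}, where the scheme is exactly yours --- bounded distortion of the unstable Jacobian, item (4) as the central limit argument, and items (1)--(3) derived from it. However, two of your steps are incorrect or incomplete as stated, and the difficulty you flag at the end is mis-identified. Your ``engine'' is false as written: for a fixed foliation box $B$, the disintegration of $(f^n)_*(\Leb_{D^u})$ restricted to $B$ does \emph{not} have conditional densities bounded below (nor conditionals equivalent to Lebesgue on plaques), for any $n$, because $f^n(D^u)\cap B$ contains incomplete plaque crossings near $f^n(\partial D^u)$, on which the conditional measure vanishes on part of the plaque. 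The standard repair is to split $(f^i)_*(\Leb_{D^u})$ restricted to $B$ into complete crossings, whose densities do lie in $[C^{-1},C]$ by distortion, plus the mass carried by incomplete crossings, and to prove that the latter tends to $0$ as $i\to\infty$: incomplete crossings lie within bounded leaf-distance of $f^i(\partial D^u)$, so by bounded distortion their relative mass is comparable to the $\Leb_{D^u}$-measure of an exponentially small neighborhood of $\partial D^u$. Only after this boundary-disposal lemma does your ``closedness of the bounded-density class'' argument apply to the Ces\`aro limit; the real issue is these boundary terms, not the H\"older regularity of holonomies.

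Second, for weak$^*$ compactness in (1) you say closedness of the Gibbs $u$-property ``is exactly the limit argument used in (4)''. That argument yields uniform density bounds only for the particular measures constructed in (4). For an arbitrary weak$^*$ convergent sequence of Gibbs $u$-states, absolute continuity of conditionals is not by itself a weak$^*$-closed property; you need the separate (standard) fact that \emph{every} Gibbs $u$-state automatically has conditional densities within the same uniform bounds $[C^{-1},C]$, which follows from invariance plus distortion (the density is the normalized backward product of unstable Jacobians along the plaque). Without that lemma, compactness does not follow. Relatedly, your reduction of (3) to ``Fubini/measurable selection'' is too weak: the exceptional subsequence of empirical measures depends on the point $x$, so integrating the identity you wrote controls nothing about $x$-dependent accumulation points. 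The argument in \cite[Section~11.2]{Beyond} is by contradiction: using separability of $C^0(M)$ and Hahn--Banach separation from the compact convex set $\Gibb^u(f)$, one finds a single continuous function, constants $\alpha>\beta$, and a positive Lebesgue measure subset $A$ of $D^u$ on which the time averages exceed $\alpha$ infinitely often while every Gibbs $u$-state integrates the function to at most $\beta$; then reverse Fatou together with (4) applied to $\Leb_A$ gives a contradiction. These are the concrete places where your sketch, as written, does not yet constitute a proof.
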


\begin{remark}\label{r.Gibbsustates}
Since the unstable foliation is absolutely continuous, part (3) of the proposition implies that there is a full volume
subset of points $x\in M$ for which every empirical measure is a Gibbs $u$-state.
\end{remark}

\section{A physical property}\label{physical}

We use the expression \emph{physical property} to refer to any property that holds on a positive
volume measure subset of the ambient manifold for any diffeomorphism.
The physical property is \emph{full} if it holds on a full volume subset.
The main result of this section is the following full physical property for $C^1$ diffeomorphism:

\begin{theorem}\label{t.limit}
Let $U\subseteq M$ be a compact subset of a manifold and $f: U\to \inte(U)$ be an embedding.
Then there is a full volume measure subset $\Gamma$ of $U$ such that
\begin{equation}\label{eq.limit}
\limsup_n \frac 1n \log|Df^n(x)| \le 0 \text{ and } \int_M \log|\det Df| \, d\nu_x \le 0
\end{equation}
for any empirical measure $\nu_x$ of every $x\in \Gamma$.
\end{theorem}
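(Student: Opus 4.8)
The plan is to exploit the one structural hypothesis, that $f$ maps the compact set $U$ into its own interior. Since $f(U)\subseteq\operatorname{int}(U)\subseteq U$ and $f$ respects inclusions, induction gives $f^n(U)\subseteq U$ for every $n$ (indeed $U\supseteq f(U)\supseteq f^2(U)\supseteq\cdots$), so in particular $\operatorname{vol}(f^n(U))\le\operatorname{vol}(U)$. Because $f$ is an embedding, each $f^n$ is a $C^1$ diffeomorphism onto its image, so the change of variables formula yields
$$\int_U |\det Df^n|\,d\operatorname{vol} = \operatorname{vol}(f^n(U)) \le \operatorname{vol}(U) =: V < \infty.$$
This uniform bound on the integrated Jacobians $J_n := |\det Df^n|$ is the entire geometric input; everything after it is soft.

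First I would deduce the pointwise limsup statement from this bound by a standard Chebyshev--Borel--Cantelli argument. Fix $\epsilon>0$ and set $B_n=\{x\in U: J_n(x)>e^{n\epsilon}\}$. Chebyshev's inequality together with the displayed bound gives $\operatorname{vol}(B_n)\le e^{-n\epsilon}\int_U J_n\,d\operatorname{vol}\le V e^{-n\epsilon}$, whence $\sum_n\operatorname{vol}(B_n)<\infty$. By Borel--Cantelli almost every $x$ lies in only finitely many $B_n$; since $A_\epsilon:=\{x:\limsup_n \tfrac1n\log J_n(x)>\epsilon\}\subseteq\limsup_n B_n$, we get $\operatorname{vol}(A_\epsilon)=0$. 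Taking $\epsilon=1/m$ and letting $m\to\infty$ shows $\limsup_n\tfrac1n\log|\det Df^n(x)|\le 0$ for volume-almost every $x\in U$; let $\Gamma$ be this full-volume set.

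Finally I would transfer the bound to empirical measures using continuity. Since $f$ is a $C^1$ embedding of the compact set $U$, its Jacobian never vanishes, so $\phi:=\log|\det Df|$ is continuous and bounded on $U$. Let $x\in\Gamma$ and let $\nu_x$ be an empirical measure, realised as a weak$^*$ limit $n_k^{-1}\sum_{i=0}^{n_k-1}\delta_{f^i(x)}\to\nu_x$. All forward iterates of $x$ stay in $U$, so $\nu_x$ is supported in $U$ and $\phi$ is $\nu_x$-integrable; by weak$^*$ convergence together with the chain rule $\log|\det Df^{n}|=\sum_{i=0}^{n-1}\phi\circ f^i$,
$$\int_M \phi\,d\nu_x = \lim_k \frac{1}{n_k}\log|\det Df^{n_k}(x)| \le \limsup_n \frac1n\log|\det Df^n(x)| \le 0.$$
This is the second inequality of \eqref{eq.limit}; reading $|Df^n(x)|$ as the Jacobian $|\det Df^n(x)|$ (the operator-norm reading is impossible, since an unstable direction forces $\tfrac1n\log\|Df^n\|>0$), the same identity makes the first inequality literally the statement already proved.

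I expect the only genuinely delicate point to be the regularity needed for the change of variables on the possibly non-smooth region $U$: one must ensure $U$ is full-dimensional so that $\operatorname{vol}(f^n(U))=\int_U J_n\,d\operatorname{vol}$ holds and that $f^n(U)$ is measurable, both of which follow from $f$ being an embedding. The measure-theoretic core is routine, so the conceptual weight of the theorem rests entirely on the elementary but crucial observation that mapping into the interior keeps the total Jacobian mass bounded.
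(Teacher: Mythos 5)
Your proof is correct and takes essentially the same route as the paper: the geometric input is the bound $\int_U |\det Df^n|\,d\operatorname{vol} = \operatorname{vol}(f^n(U)) \le \operatorname{vol}(U)$ coming from injectivity, followed by a Chebyshev--Borel--Cantelli argument on the sets $B_n=\{|\det Df^n|>e^{n\epsilon}\}$ (these are exactly the paper's sets $\cI_{n,r}^c$ with $r=\epsilon$), and then passage to empirical measures via weak$^*$ continuity of $\mu\mapsto\int\log|\det Df|\,d\mu$, which is the same fact the paper packages as weak$^*$ compactness of $\cK_r$. Your reading of $|Df^n|$ as $|\det Df^n|$ also matches the paper's intent, as confirmed by its remark that the quantity in question is the sum of all Lyapunov exponents.
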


Note that the left hand side of \eqref{eq.limit} is the average, with respect to $\nu_x$,
of the sum of all Lyapunov exponents.
For volume preserving maps, the statement is contained in the Oseledets theorem.

\begin{proof}
Let $\cP$ denote the space of probabilities of $M$ and, for each $r\ge 0$,
$$
\cK_r=\left\{\mu\in\cP: \int_M \log|\det Df| \, d\mu \leq r\right\}.
$$
We are going to show that for each $r>0$ there is a full volume subset $\Gamma_r$ such that,
for every $x\in\Gamma_r$,
$$
\limsup_n \frac 1n \log|Df^n(x)| \le r
$$
and every empirical measure $\nu_x $ of every $x$ belongs to $\cK_r$.
Clearly, we may choose $r\mapsto \Gamma_r$ to be monotone.
Then $\Gamma=\cap_{r>0} \Gamma_r$ is still a full volume subset satisfying all
the conditions in the statement.

Observe that $\cK_r$ is weak$^*$-compact and convex. For fixed $r>0$, denote
$$
\cI_{n,r}=\left\{x\in M: \frac{1}{n}\sum_{i=0}^{n-1}\delta_{f^i(x)}\in \cK_r\right\}.
$$
The definition means that $|\det Df^n(z) \geq e^{nr}$ for every $z\in \cI_{n,r}^c$. Then
$$
\Leb(M) \geq % \Leb(f^n(\cI_{n,r})) =
\int_{\cI{n,r}^c} |\det Df^n| d\Leb(x)
\geq e^{nr} \Leb(\cI_{n,r}^c).
$$
Consequently, $\Leb(\cI_{n,r}^c)\leq \Leb(M) e^{-nr}$ which, by Borel--Cantelli, implies that
$$
\Leb\left(\bigcap_n\bigcup_{k\geq n} \cI_{k,r}^c\right)=0.
$$
Now, the definition also means that if $x$ is in $\bigcup_n\bigcap_{i\geq n} \cI_{i,r}$
then there exists $n\ge 1$ such that
$$
\frac 1n \sum_{i=0}^{k-1}\delta_{f^i(x)} \in \cK_r
\text{ for every } k \ge n.
$$
Then, by compactness, every empirical measure of $\nu_x$ is also in $\cK_r$. This shows that
we may take $\Gamma_r$ to be $\bigcup_n\bigcap_{i\geq n} \cI_{i,r}$.
\end{proof}

The following immediate consequence of the theorem was used previously in \cite{Yan17}:

\begin{remark}
Suppose that $f:M\to M$ is such that $\int \log|\det Df| \, d\mu \geq 0$ for any invariant probability $\mu$.
Then there is a full Lebesgue measure subset $\Gamma$ of $M$ such that
$$
\int \log|\det Df| \, d\nu_x = 0
$$
for any $x\in\Gamma$ and any accumulation point $\nu_x$ of ${n}^{-1}\sum_{i=0}^{n-1} \delta_{f^i(x)}$.
In particular, if there exists a unique invariant probability $\mu$ for which $\int \log|\det Df| \, d\mu = 0$,
and then $\mu$ must be a physical measure (because it must coincide with any $\nu_x$).
\end{remark}

\section{Proof of Theorem~\ref{main.global}}\label{s.Theorem A}

Throughout this section we take $f$ to be a $C^{1+}$ partially volume expanding diffeomorphism.
Let $c>0$ be as in \eqref{eq.used3}.

\begin{lemma}\label{l.simple_fact1}
If $\tilde\mu$ is an ergodic Gibbs $u$-state such that
$$
\int_M \log|\det Df| \, d\tilde\mu \le 0
$$
then $\tilde\mu$ is a physical measure and all its center Lyapunov exponents are less than $-c$.
\end{lemma}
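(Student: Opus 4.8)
The plan is to read the conclusion off almost directly from the estimate \eqref{eq.used2}, so the argument will be short. First I would exploit ergodicity: since $\tilde\mu$ is ergodic, the Lyapunov exponents $\lambda_1 > \cdots > \lambda_l$ and the total sum $\Delta(x) = \sum_{i=1}^l \lambda_i(x)\dim E_x^i$ are constant for $\tilde\mu$-almost every $x$. Integrating \eqref{eq.Delta} against $\tilde\mu$ and invoking the hypothesis, I obtain
$$
\Delta = \int_M \Delta(x)\, d\tilde\mu = \int_M \log|\det Df|\, d\tilde\mu \le 0,
$$
where I write $\Delta$ for the (constant) almost-sure value of $\Delta(x)$.

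Next I would combine this bound with \eqref{eq.used2}. For every center index $i\in\{k+1,\dots,l\}$ that inequality reads $\Delta - \lambda_i \ge c$, and therefore
$$
\lambda_i \le \Delta - c \le -c < 0.
$$
Thus every center Lyapunov exponent of $\tilde\mu$ is bounded above by $-c$, and in particular they are all strictly negative. Finally, I would invoke the partial converse recorded in the Gibbs $u$-states subsection of Section~2: an ergodic Gibbs $u$-state all of whose center Lyapunov exponents are negative is automatically a physical measure. Applying this to $\tilde\mu$ then completes the proof.

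There is essentially no hard step here, because the real content has already been packaged into \eqref{eq.used2}, whose derivation from the partial volume expansion hypothesis was carried out earlier via the compactness of the set of pairs $(x,H)$ with $H\supset E_x^u$. The only point that invites a moment's care is whether the bound on the center exponents is strict, that is whether one actually gets $\lambda_i \le -c$ or $\lambda_i < -c$; but since $c>0$ the weaker bound $\lambda_i \le -c$ already forces negativity, which is all that the converse requires, so the distinction is immaterial for the physical-measure conclusion and for the stated upper bound $-c$.
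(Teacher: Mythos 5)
Your proposal is correct and follows essentially the same route as the paper: both pass from the hypothesis to the constancy of the exponent sum via Birkhoff/Oseledets, apply \eqref{eq.used2} to bound every center exponent by $\Delta - c \le -c$, and then invoke the standard fact that an ergodic Gibbs $u$-state with negative center exponents is physical. Your remark about the non-strict bound $\lambda_i \le -c$ is apt — the paper's own proof yields only this, and it suffices.
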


\begin{proof}
Using the Birkhoff ergodic theorem and inequality \eqref{eq.used2},
$$
0 \ge \int_M \log| \det Df| \, d\tilde\mu
= \int_M \Delta \, d\tilde\mu
\ge c + \int_M \hat\lambda \, d\tilde\mu
$$
where $\hat\lambda$ denotes the largest center exponent. Thus, all the center Lyapunov exponents
are less than $-c$. By \cite[Section 11.2]{Beyond}, the fact that the center exponents are negative
ensures that $\tilde\mu$ is a physical measure.
\end{proof}

\begin{corollary}\label{c.physicalmeasure}
There is a full volume measure subset $\Gamma$ of $M$ such that every empirical measure $\nu_x$ of any $x\in\Gamma$
has an ergodic component $\mu_x$ which is a Gibbs $u$-state with $\int \log|\det Df| \, d\mu_x \leq 0$ and whose
center Lyapunov are all smaller than $-c$. In particular, $\mu_x$ is a physical measure.
\end{corollary}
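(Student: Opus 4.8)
The plan is to assemble the corollary entirely from results already in hand, the two key ingredients being the full-volume sets of Theorem~\ref{t.limit} and Remark~\ref{r.Gibbsustates}, together with the ergodic decomposition and Lemma~\ref{l.simple_fact1}. First I would let $\Gamma$ be the intersection of the full volume set furnished by Theorem~\ref{t.limit} (applied to $U=M$) with the full volume set furnished by Remark~\ref{r.Gibbsustates}. Being the intersection of two full volume sets, $\Gamma$ is again of full volume. By construction, for every $x\in\Gamma$ and every empirical measure $\nu_x$ of $x$ we have simultaneously that $\nu_x\in\Gibb^u(f)$ and that $\int_M \log|\det Df|\, d\nu_x\le 0$. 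Recall also that, being an accumulation point of Birkhoff averages, $\nu_x$ is automatically $f$-invariant, so its ergodic decomposition is available.

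Next I would extract a suitable ergodic component. Writing the ergodic decomposition $\nu_x=\int \mu\, d\hat\nu_x(\mu)$ over the ergodic invariant probabilities $\mu$, and using that $\log|\det Df|$ is continuous (hence bounded) because $f$ is $C^1$ on the compact manifold $M$, I would integrate against the decomposition to obtain
$$
\int\left(\int_M \log|\det Df|\, d\mu\right) d\hat\nu_x(\mu)
= \int_M \log|\det Df|\, d\nu_x \le 0 .
$$
It follows that the set of ergodic components $\mu$ with $\int_M \log|\det Df|\, d\mu\le 0$ has positive $\hat\nu_x$-measure, since otherwise the left-hand side would be strictly positive. I would then pick any such component and call it $\mu_x$.

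Finally I would identify $\mu_x$ as the measure claimed in the statement. By part~(1) of Proposition~\ref{p.Gibbsustates}, an ergodic component of the Gibbs $u$-state $\nu_x$ is itself a Gibbs $u$-state, so $\mu_x$ is an ergodic Gibbs $u$-state satisfying $\int_M \log|\det Df|\, d\mu_x\le 0$. Lemma~\ref{l.simple_fact1} then applies verbatim and gives that $\mu_x$ is a physical measure whose center Lyapunov exponents are all smaller than $-c$, which is exactly the desired conclusion.

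I do not expect a genuine obstacle here: the corollary is essentially a packaging of Theorem~\ref{t.limit}, Remark~\ref{r.Gibbsustates}, Proposition~\ref{p.Gibbsustates} and Lemma~\ref{l.simple_fact1}. The only point demanding any care is the interchange of integration in the ergodic-decomposition step and the ensuing pigeonhole argument producing a component with nonpositive integral; both are routine precisely because $\log|\det Df|$ is continuous on the compact manifold $M$, so no integrability or measurability difficulty arises.
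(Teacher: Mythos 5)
Your proposal is correct and follows essentially the same route as the paper: intersect the full volume sets from Theorem~\ref{t.limit} and Remark~\ref{r.Gibbsustates}, pass to an ergodic component via the ergodic decomposition and part~(1) of Proposition~\ref{p.Gibbsustates}, and conclude with Lemma~\ref{l.simple_fact1}. In fact you spell out the pigeonhole step producing an ergodic component with $\int \log|\det Df|\,d\mu_x\le 0$, which the paper's own (rather terse) proof leaves implicit.
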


\begin{proof}
By Remark~\ref{r.Gibbsustates} and Theorem~\ref{t.limit}, there is a full volume subset $\Gamma$ of $M$ such that
every empirical measure $\nu_x$ of every $x\in\Gamma$ is a Gibbs $u$-state and satisfies $\int_M \log| \det Df| \, d\nu_x \leq 0$. By part (1) of Proposition~\ref{p.Gibbsustates}, this $\mu_x$ is a Gibbs $u$-state.
Using Lemma~\ref{l.simple_fact1} with $\tilde\mu=\mu_x$, we get the claim of the proposition.
\end{proof}

The ergodic decomposition theorem (see \cite[Chapter~5]{FET}) states given any invariant probability
measure $\mu$, there exists a probability measure $\Phi_\mu$ on the space of $f$-invariant probability
measures on $M$, giving full weight to the subset of ergodic probability measures, and such that
$\mu(E) = \int_\cP \eta(E) \,d\Phi_\mu(\eta)$ for every measurable set $E\subset M$.
The ergodic probability measures in the support of $\Phi_\mu$ are called \emph{ergodic components} of $\mu$.
If $\mu$ is a Gibbs $u$-state, the support of $\Phi_\mu$ is contained in the space of Gibbs $u$-states
(recall Proposition~\ref{p.Gibbsustates}).

\begin{lemma}\label{l.simple_fact2}
If $\tilde\mu$ is an ergodic Gibbs $u$-state whose center Lyapunov exponents are all negative then
it is an isolated point in the set of ergodic Gibbs  $u$-states.
Consequently, $\tilde\mu$ is an ergodic component of some Gibbs $u$-state $\mu$ then
$\Phi_\mu(\{\tilde\mu\})>0$.
\end{lemma}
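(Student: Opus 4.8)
The plan is to establish the isolation statement first, and then read off the claim about $\Phi_\mu$ as a soft consequence. Two facts are used throughout without further comment: by \cite[Section 11.2]{Beyond} every ergodic Gibbs $u$-state with all negative center exponents is automatically a physical measure, and distinct ergodic physical measures have disjoint basins (their Birkhoff averages converge to different limits). The first device I would set up is an upper-semicontinuity fact for the center spectrum. Writing $\hat\lambda(\nu)=\int_M\hat\lambda\,d\nu$ for the integrated top center exponent of an invariant measure $\nu$, subadditivity of $n\mapsto\log\|Df^n\mid_{E^{cs}}\|$ together with the $Df$-invariance of $E^{cs}$ gives
$$
\hat\lambda(\nu)=\inf_{n\ge 1}\frac1n\int_M\log\|Df^n\mid_{E^{cs}}\|\,d\nu ,
$$
so $\nu\mapsto\hat\lambda(\nu)$ is upper semicontinuous, being an infimum of weak$^*$-continuous functions. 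Hence if $\hat\lambda(\tilde\mu)<0$ and $\mu_k\to\tilde\mu$ weak$^*$ through ergodic Gibbs $u$-states, then $\hat\lambda(\mu_k)\le -c'$ for all large $k$, with $c'=|\hat\lambda(\tilde\mu)|/2>0$; in particular the $\mu_k$ eventually have all center exponents below $-c'$, uniformly, and are themselves physical measures.

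The core of the proof is then an argument by contradiction: assuming a sequence $\mu_k\to\tilde\mu$ of pairwise distinct ergodic Gibbs $u$-states, I would manufacture a positive-volume overlap of the basins $\cB(\mu_k)$ and $\cB(\tilde\mu)$. Fix $p\in\supp\tilde\mu$ lying in a Pesin block and which is a density point of $\cB(\tilde\mu)$, and take a small local product box $Q=Q(p)$ adapted to $E^u\oplus E^{cs}$, so that the relative volume of $\cB(\tilde\mu)$ in $Q$ is close to $1$. Since $\supp\tilde\mu\subseteq\liminf_k\supp\mu_k$, choose $p_k\in\supp\mu_k$ with $p_k\to p$. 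Two uniform geometric inputs now enter. First, the strong unstable disks $W^u_\delta(p_k)\subset\supp\mu_k$ have a uniform size $\delta$, by uniform expansion of $E^u$ and part (2) of Proposition~\ref{p.Gibbsustates}. Second, the uniform contraction $\hat\lambda(\mu_k)\le -c'$ feeds the hyperbolic-times construction of \cite{ABV00}, producing, on a positive-$\mu_k$-measure set, local stable manifolds of a uniform size $\delta_1=\delta_1(c')>0$. Combined with the uniform lower bound on Gibbs $u$-densities along unstable leaves, this forces $\cB(\mu_k)$ to occupy a definite fraction of $Q$, uniformly in $k$. Shrinking $Q$ so that $\cB(\tilde\mu)$ fills all but less than that fraction, the two basins intersect in positive volume, contradicting disjointness. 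Therefore no such sequence exists and $\tilde\mu$ is isolated among ergodic Gibbs $u$-states.

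The step I expect to be the main obstacle is precisely this last one: converting weak$^*$ convergence of the $\mu_k$ into a \emph{volume-positive} overlap of basins. The delicate point is that basin sizes are controlled by the size of local stable manifolds, which in general degenerate together with the Pesin non-uniformity constants. The resolution is that along a sequence converging to $\tilde\mu$ the center contraction rate is uniform, by the semicontinuity recorded above, so hyperbolic times restore a uniform lower bound $\delta_1$ on stable-manifold sizes; it is this uniformity, matched against the uniform unstable size $\delta$ and the uniform Gibbs density bound, that makes the overlap estimate survive the passage to the limit.

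Finally, the displayed consequence is immediate. The ergodic decomposition $\Phi_\mu$ of a Gibbs $u$-state $\mu$ is supported on the set of ergodic Gibbs $u$-states (Proposition~\ref{p.Gibbsustates}), and to say that $\tilde\mu$ is an ergodic component of $\mu$ means $\tilde\mu\in\supp\Phi_\mu$. Since $\tilde\mu$ is isolated in the ambient set of ergodic Gibbs $u$-states, the singleton $\{\tilde\mu\}$ is relatively open in $\supp\Phi_\mu$; an isolated point of the support of a probability measure carries positive mass, whence $\Phi_\mu(\{\tilde\mu\})>0$.
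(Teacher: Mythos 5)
Your handling of the second claim (isolation $\Rightarrow$ $\Phi_\mu(\{\tilde\mu\})>0$) is correct and matches the paper, and your upper-semicontinuity of $\nu\mapsto\int\hat\lambda\,d\nu$ via subadditivity is also correct (though, as it turns out, not needed). The genuine gap is in the step you yourself flag as the main obstacle: passing from ``a positive-$\mu_k$-measure set with stable manifolds of uniform size $\delta_1$'' to ``$\cB(\mu_k)$ occupies a definite fraction of the fixed box $Q$''. Hyperbolic times give you a set $G_k$ with $\mu_k(G_k)\ge\theta(c')>0$ carrying stable manifolds of uniform \emph{size}, but they give no control on the \emph{location} of $G_k$: nothing prevents $G_k$ from lying entirely outside $Q$, and from missing every unstable plaque crossing $Q$ (points of $\supp\mu_k$ inside $Q$ may only become Pliss points after many iterates). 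The uniform bounds on Gibbs $u$-densities cannot repair this: they compare conditional measures of $\mu_k$ with Lebesgue measure \emph{within a given unstable disk}, but say nothing about which disks carry $G_k$; from $\mu_k(G_k)\ge\theta$ and $\mu_k(Q)\ge\beta$ you cannot even conclude $G_k\cap Q\neq\emptyset$ unless $\theta+\beta>1$. A second, more repairable, omission: even given Pliss points of definite Lebesgue density on a disk crossing $Q$, converting the stable manifolds through them into \emph{volume} requires absolute continuity of the stable holonomy with bounds uniform in $k$; Gibbs densities, which live along unstable leaves, do not do that job.

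The paper's proof (following Lemma~2.9 of \cite{BoV00}) avoids the localization problem by reversing the roles of the limit and of the sequence. Pesin theory is applied only to the single fixed measure $\tilde\mu$: one takes a set of $\tilde\mu$-generic points with local stable manifolds and absolutely continuous stable holonomy --- no uniformity in $k$ is ever needed, since there is only one measure involved. For the approximating measures $\mu_j$ one uses only their \emph{unstable} disks, which are uniformly sized and correctly positioned for free: $\supp\mu_j$ is $u$-saturated by part (2) of Proposition~\ref{p.Gibbsustates}, and $\supp\tilde\mu$ is accumulated by the supports of the $\mu_j$, so unstable disks inside $\supp\mu_j$ cross the stable lamination of $\tilde\mu$-typical points. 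Absolute continuity of that (fixed) lamination then produces a positive Lebesgue measure subset of such a disk lying on stable manifolds of $\tilde\mu$-generic points, hence inside $\cB(\tilde\mu)$; since Lebesgue-a.e.\ point of a well-chosen unstable disk in $\supp\mu_j$ is $\mu_j$-generic, the two Birkhoff limits agree and $\mu_j=\tilde\mu$. In short: the non-uniform object (stable manifolds) is used only for the fixed $\tilde\mu$, and the uniform object (unstable disks) for the varying $\mu_j$; your argument distributes the roles the other way around, and that is exactly where it breaks.
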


\begin{proof}
By now, the argument is quite standard (see for instance \cite[Lemma~2.9]{BoV00}).
Let $(\mu_j)_j$ be any sequence of ergodic Gibbs $u$-states accumulating on $\tilde\mu$.
Since $\tilde\mu$ is a hyperbolic measure, almost every point has a (Pesin) local stable manifold.
Since the stable lamination is absolutely continuous, and $\supp\tilde\mu$ is accumulated by the
supports of the $\mu_j$, local stable manifolds of $\tilde\mu$-typical points contain $\mu_j$-typical
points. That implies that $\mu_j=\tilde\mu$ for every large $j$. This proves that $\tilde\mu$ is isolated.
The last claim is a consequence, since $\Phi_\mu$ gives full weight to the set of ergodic Gibbs $u$-states.
\end{proof}

We are ready to prove that under our assumptions $f:M\to M$ has only finitely many physical measures,
and that their basins cover a full volume set. That is done in the next couple of lemmas.

\begin{lemma}\label{l.finite}
The diffeomorphism $f$ has finitely many ergodic Gibbs $u$-states with negative center exponents.
\end{lemma}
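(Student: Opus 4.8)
The plan is to argue by contradiction, combining a uniform negativity of the top center exponent with the isolation mechanism already used in the proof of Lemma~\ref{l.simple_fact2}. First I would record that every ergodic Gibbs $u$-state $\mu$ with negative center exponents is a physical measure, by the criterion recalled in the Gibbs $u$-state subsection. Hence its basin $\cB(\mu)$ has positive volume and meets the full-volume set $\Gamma$ of Theorem~\ref{t.limit}; since the empirical measure of every point of $\cB(\mu)\cap\Gamma$ equals $\mu$, this forces $\int_M \log|\det Df|\,d\mu \le 0$. Feeding this into \eqref{eq.used2} exactly as in Lemma~\ref{l.simple_fact1} yields a uniform gap $\int_M \hat\lambda\,d\mu \le \int_M \Delta\,d\mu - c \le -c < 0$, where $\hat\lambda$ denotes the largest center exponent. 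Thus every state I want to count carries this quantitative bound, and the point of the argument is that the bound survives passage to a weak$^*$ limit, whereas mere negativity would not.

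Suppose then, for contradiction, that there is a sequence $(\mu_j)_j$ of pairwise distinct such states. By part~(1) of Proposition~\ref{p.Gibbsustates} the space $\Gibb^u(f)$ is weak$^*$-compact, so after passing to a subsequence I may assume $\mu_j \to \mu_\infty \in \Gibb^u(f)$. The map $\mu \mapsto \int_M \log|\det Df|\,d\mu$ is weak$^*$-continuous and each $\mu_j$ is volume non-expanding, so $\int_M \log|\det Df|\,d\mu_\infty \le 0$; the pointwise inequality $\hat\lambda \le \Delta - c$ then integrates to $\int_M \hat\lambda\,d\mu_\infty \le -c < 0$. Decomposing $\mu_\infty = \int \eta\,d\Phi_{\mu_\infty}(\eta)$ into ergodic components (all Gibbs $u$-states), the negativity of this integral forces $\Phi_{\mu_\infty}\{\eta : \hat\lambda(\eta) < 0\} > 0$. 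In particular $\mu_\infty$ admits at least one ergodic component $\eta_0$ whose center exponents are all negative, i.e. a good state to which I can apply Lemma~\ref{l.simple_fact2}.

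It remains to play $\eta_0$ against the sequence. Since $\supp\eta_0 \subseteq \supp\mu_\infty$ and $\mu_j \to \mu_\infty$ weak$^*$, the supports of the $\mu_j$ accumulate on $\supp\eta_0$: every neighbourhood of a point of $\supp\eta_0$ meets $\supp\mu_j$ for all large $j$. This is precisely the hypothesis needed to rerun the argument of Lemma~\ref{l.simple_fact2} with $\eta_0$ in the role of $\tilde\mu$. Indeed $\eta_0$ is a hyperbolic measure, so a positive $\eta_0$-measure Pesin block carries local stable manifolds of definite size; by absolute continuity of the stable lamination these stable manifolds capture $\mu_j$-generic points once $\supp\mu_j$ comes close to the block, and a $\mu_j$-generic point on the stable manifold of an $\eta_0$-generic point has the same forward time averages as the latter. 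Hence $\mu_j = \eta_0$ for all large $j$, contradicting that the $\mu_j$ are pairwise distinct. This contradiction shows that there are only finitely many ergodic Gibbs $u$-states with negative center exponents.

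The hard part will be the capturing step in the last paragraph. Weak$^*$ convergence only tells us that $\supp\mu_j$ approaches $\supp\mu_\infty$, whereas $\eta_0$ is merely one ergodic component of $\mu_\infty$; I must make sure that $\supp\mu_j$ genuinely approaches the positive-$\eta_0$-measure Pesin block on which the local stable manifolds have uniform size, and that the absolute-continuity argument really places $\mu_j$-generic points inside those stable manifolds. Once this localisation is secured the identity $\mu_j = \eta_0$ is exactly as in Lemma~\ref{l.simple_fact2}; the remaining ingredients, namely weak$^*$-continuity of $\int_M \log|\det Df|$ and the ergodic decomposition bound, are routine.
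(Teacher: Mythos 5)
Your proposal is correct and follows essentially the same route as the paper: contradiction, weak$^*$ compactness of $\Gibb^u(f)$, passage of the bound $\int\log|\det Df|\,d\mu_j\le 0$ to the limit, extraction of an ergodic component with negative center exponents, and the stable-manifold capture argument of Lemma~\ref{l.simple_fact2} applied with that component in place of $\tilde\mu$. The only (immaterial) difference is that you decompose $\int\hat\lambda\,d\mu_\infty$ over ergodic components directly, whereas the paper decomposes $\int\log|\det Df|\,d\mu_\infty$ and then invokes Lemma~\ref{l.simple_fact1}; your explicit justification that the supports of the $\mu_j$ accumulate on $\supp\eta_0$ (via $\supp\eta_0\subseteq\supp\mu_\infty$) is exactly the point the paper handles with ``it is no restriction to assume.''
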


\begin{proof}
Suppose that $f$ admits infinitely many distinct ergodic Gibbs $u$-states $\mu_j$ whose center exponents
are all negative.
By compactness of the space of Gibbs $u$-states (part (1) of Proposition~\ref{p.Gibbsustates}),
we may assume that the sequence $\mu_j$ converges in the weak-* topology to some Gibbs $u$-state $\mu$.
As pointed out before, every $\mu_j$ is a physical measure. Thus,
$$
\lim \frac{1}{n}\sum_{i=0}^{n-1}\delta_{f^i(x_n)}=\mu_j
$$
for every $x$ in some positive volume set. Take $x$ in the intersection of such a set with the full volume
set $\Gamma$ in Theorem~\ref{t.limit}, so that
$$
\int \log|\det Df| \, d\mu_j \leq 0.
$$
Making $j \to \infty$, we get that
$$
\int \log|\det Df| \, d\mu \leq 0.
$$
So, by part (1) of Proposition~\ref{p.Gibbsustates}, there is an ergodic component $\tilde{\mu}$ of $\mu$
which is a Gibbs $u$-state of $f$ such that
$$
\int \log|\det Df| \, d\tilde{\mu}\leq 0,
$$
and it is no restriction to assume that the support of $\tilde{\mu}$ is contained in the support of $\mu$.
Thus, using \eqref{eq.used2},
$$
0 \ge \int_M \log| \det Df| \, d\tilde\mu
= \int_M \Delta \, d\tilde\mu
\ge c + \int_M \hat\lambda \, d\tilde\mu
$$
By Lemma~\ref{l.simple_fact1}, it follows that $\tilde\mu$ is a physical measure and its
center Lyapunov exponents are negative.
Now the same argument as in Lemma~\ref{l.simple_fact2} proves that $\mu_j=\tilde\mu$
for every large $j$, a contradiction.
%quite standard (see for instance \cite[Lemma~2.9]{BoV00}).
%Since $\tilde\mu$ is a hyperbolic measure, almost every point has a (Pesin) local
%stable manifold. Since the stable lamination is absolutely continuous,
%and $\supp\tilde\mu$ is accumulated by the supports of the Gibbs $u$-states $\mu_j$,
%local stable manifolds of $\tilde\mu$-typical points contain $\mu_j$-typical points.
\end{proof}

\begin{lemma}\label{l.fullvolume}
The union of the basins of the physical measures $\mu_1,\cdots, \mu_m$ in Lemma~\ref{l.finite}
is a full volume subset of $M$.
\end{lemma}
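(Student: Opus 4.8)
The plan is to show that $N=M\setminus\bigcup_{i=1}^m\cB(\mu_i)$ has zero volume, where $\mu_1,\dots,\mu_m$ are the physical measures furnished by Lemma~\ref{l.finite}. Two inputs drive the argument. First, each $\mu_i$ is a \emph{hyperbolic} ergodic measure: its center exponents are $<-c$ by Lemma~\ref{l.simple_fact1}, while its unstable exponents are positive, so Pesin theory applies and the stable lamination is absolutely continuous. Second, Corollary~\ref{c.physicalmeasure} already tells us that for volume--almost every $x$ the empirical measure $\nu_x$ has an ergodic component equal to one of the $\mu_i$. The whole difficulty is to close the gap between "$\mu_i$ is \emph{an ergodic component} of some empirical measure of $x$" and the far stronger statement "$x\in\cB(\mu_i)$", i.e. that the full time average of $x$ converges to the single measure $\mu_i$.

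First I would make each basin concrete via stable manifolds. Fix $i$ and apply Pesin's stable manifold theorem: $\mu_i$--almost every point $p$ carries a local stable manifold $W^s_{\loc}(p)$ tangent to $E^{cs}$, and every $y\in W^s_{\loc}(p)$ satisfies $d(f^n(y),f^n(p))\to0$, hence has the same forward time averages as $p$. Taking $p$ among the $\mu_i$--generic points (a full $\mu_i$--measure set contained in $\cB(\mu_i)$ by Birkhoff) yields $W^s_{\loc}(p)\subset\cB(\mu_i)$. I would then fix a Pesin block $\Lambda_i$ of positive $\mu_i$--measure on which these local stable manifolds have uniform size and vary continuously, and set $H_i=\bigcup_{p\in\Lambda_i}W^s_{\loc}(p)$. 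Absolute continuity of the stable lamination gives $\Leb(H_i)>0$, and $H_i\subset\cB(\mu_i)$. Since each $\cB(\mu_i)$ is invariant, the saturation $\bigcup_{n\ge0}f^{-n}(H_i)$ lies inside $\cB(\mu_i)$ as well; moreover $\cB(\mu_i)$ is \emph{stable--saturated} (a whole local stable leaf belongs to it as soon as one of its points does), which I will use to organize the local picture inside a lamination box.

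The remaining and principal obstacle is to \emph{capture} almost every orbit. For $x$ in the full volume set $\Gamma$ of Corollary~\ref{c.physicalmeasure}, some empirical measure $\nu_x$ has an ergodic component $\mu_i$, and by Lemma~\ref{l.simple_fact2} this component carries positive weight $\Phi_{\nu_x}(\{\mu_i\})>0$; hence the forward orbit of $x$ visits every neighborhood of $\supp\mu_i$, in particular of the block $\Lambda_i$, with positive upper frequency. Turning this recurrence into membership of $\bigcup_{n}f^{-n}(H_i)$ is the delicate point: one must rule out that the orbit merely grazes the stable lamination without ever landing on a leaf. I would handle this by a Lebesgue density argument inside a lamination box around $\Lambda_i$, disintegrating volume along the stable leaves via absolute continuity and exploiting that $\cB(\mu_i)$ is stable--saturated, so that its intersection with the box is a union of whole leaves of positive transverse measure. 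The isolation of the $\mu_i$ among ergodic Gibbs $u$-states (Lemma~\ref{l.simple_fact2}) is what prevents a positive volume set of orbits from being retained by some other, non-physical ergodic component with nonnegative center exponent. A Borel--Cantelli estimate on return times to the box then forces almost every positively recurrent orbit onto a stable leaf, hence into $\cB(\mu_i)$.

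Granting this capturing step, $N\cap\Gamma$ is a null set, so $\Leb(N)=0$ and $\bigcup_{i=1}^m\cB(\mu_i)$ has full volume, completing the proof of Theorem~\ref{main.global}. I expect the Pesin-theoretic capturing argument of the third paragraph to be the hard part; everything else is a direct assembly of Corollary~\ref{c.physicalmeasure}, the stable manifold theorem, and absolute continuity.
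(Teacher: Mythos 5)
Your overall strategy --- identify the basins via Pesin stable manifolds over a block, then ``capture'' almost every orbit --- has the same general shape as the paper's argument, and your first two paragraphs (stable saturation, positive volume of $H_i$, reduction to a capturing statement) are sound. But the capturing step, which you yourself flag as the hard part, is a genuine gap, and the mechanism you propose for it cannot work as described. Recurrence of an individual orbit to a neighborhood of the block, plus a Lebesgue density argument, plus Borel--Cantelli, does not force the orbit onto a stable leaf: the stable lamination over a Pesin block is, transversally, a closed set with empty interior in general, so ``grazing forever without landing'' is not an event that any summability estimate rules out --- you never exhibit sets whose volumes are summable, and none are available, because volume is not $f$-invariant and the full Jacobian of $f^n$ has no distortion control in the center-stable directions (the $C^{1+}$ hypothesis gives bounded distortion only along unstable leaves). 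For the same reason a density point of the bad set $N$ gives nothing: $f^n$ distorts ambient neighborhoods of $x$ without any control, so density of $N$ at $x$ says nothing about density of $N$ near $f^n(x)$.

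The paper's proof avoids tracking individual orbits altogether. Arguing by contradiction, it uses the absolute continuity of the \emph{unstable} foliation (an ingredient absent from your sketch) to place a positive leaf-volume compact piece $I$ of the bad set inside a single unstable disk, and then works with the Ces\`aro pushforwards $\frac1n\sum_{j<n} f^j_*(\Leb_I)$ of \emph{leaf} volume: any limit $\mu$ is a Gibbs $u$-state with $\int\log|\det Df|\,d\mu\le 0$ (Theorem~\ref{t.limit} plus Fatou), hence has a physical ergodic component $\tilde\mu$ carrying positive weight $a=\Phi_\mu(\{\tilde\mu\})>0$. The capture is then quantitative rather than orbitwise: since $\mu(\cU)\ge a\tilde\mu(\cU)>0$ for a neighborhood $\cU$ of the $cs$-block, the forward images of $I$ spend a definite proportion of time in $\cU$; those images are long unstable disks, any such disk crossing the block meets it in relative leaf-volume at least $b/K$ by absolute continuity of the stable holonomies, and bounded distortion of the unstable Jacobian under backward iteration transfers this definite proportion back to $I$ itself. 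Thus some $f^{-n}(\cB)$ actually meets $I$, contradicting the assumption that $I$ avoids every basin. These three ingredients --- localization on an unstable disk, pushforward of leaf volume rather than empirical measures of points, and unstable bounded distortion combined with the crossing property of the $cs$-block --- are exactly what is missing from your third paragraph, and without them the proof does not close.
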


\begin{proof}
Assume that the complement $\cC$ of the union has positive volume.
Let $\cC_0$ be the intersection of $\cC$ with the full volume set $\Gamma$ in Corollary~\ref{c.physicalmeasure}
and with the full volume set of points  $x\in M$ for which every empirical measure is a Gibbs $u$-state
(recall Remark~\ref{r.Gibbsustates}).
Since the unstable foliation is absolutely continuous, there exists some unstable disk that intersects $\cC_0$
at a positive volume subset $I$. We may take $I$ to be compact.
By part (4) of Proposition~\ref{p.Gibbsustates}, any accumulation point $\mu$ of the sequence
$$
\mu_n=\frac{1}{n}\sum_{j=0}^{n-1}f_*^j(\Leb_{I})
$$
($\Leb_I$ denotes the normalized restriction of the
volume measure to $I$)
is a Gibbs $u$-state. Then, for some subsequence $(n_k)_k$ we have
$$
\begin{aligned}
\int \log|\det Df| \, d\mu
& = \lim_k \int \log|\det Df| \, d\mu_{n_k}
= \lim_k \int_I \frac{1}{n_k} \log|\det Df^{n_k}| \,d\Leb \\
& \le \int_I \limsup_k \frac{1}{n_k} \log|\det Df^{n_k}| \,d\Leb
\le 0,
\end{aligned}
$$
by Theorem~\ref{t.limit}.
Using part (1) of Proposition~\ref{p.Gibbsustates}, it follows that there is an
ergodic component $\tilde\mu$ of $\mu$ which is a Gibbs $u$-state of $f$ and satisfies
$$
\int \log \det(T_xf)\, d\tilde{\mu}(x)\leq 0.
$$
In particular, $\supp\tilde\mu$ is contained in $\supp\mu$.
By Lemma~\ref{l.simple_fact1}, $\tilde\mu$ is a physical measure and its center
Lyapunov exponents are negative.
By Lemma~\ref{l.simple_fact2}, it follows that $a=\Phi_\mu(\{\tilde\mu\})$ is positive.

Let $D$ be an unstable disk such that (volume) almost all points in $D$ are $\tilde\mu$-typical, in the sense of Pesin: in particular, they admit local Pesin stable manifolds.
Then let $B$ be a compact positive volume subset of $D$ (the volume will be denoted as $b$) such that
\begin{itemize}
\item[(i)] the size of the local stable manifolds of points in $B$ is uniformly bounded from below;
\item[(ii)] the holonomy maps of the stable lamination through $B$ are
    uniformly absolutely continuous.
\end{itemize}
The second condition means, more precisely, that there exists $K>1$ such that the
projection $\cH^s_{D_1,D_2}: D_1 \to D_2$ along the stable laminae of $L$
between any two nearby small unstable disks $D_1$ and $D_2$ has a Jacobian that
is bounded above by $K$ and bounded below by $1/K$.

Existence of such a structure follows from
classical Pesin theory, as we observed
in our previous paper~\cite{ViY13}, where we called
$cs$-block the union of the laminae through a set
$B$ with these properties.
Indeed, (i) and (ii) together with the assumption that $L$ consists of $\tilde\mu$-typical points and has positive volume inside $D$, imply that the $cs$-block $\cB$ has positive $\tilde\mu$-measure and positive volume in the ambient manifold, and is contained in the basin of $\tilde\mu$.

We claim that there exists $n\ge 1$ and a positive
volume subset $I_0$ of $I$ such that every point of $f^n(I_0)$ is contained in $\cB$. This yields a contradiction, since $I$ is assumed to be in the complement of the basins of all physical measures.
So, we are left to prove the claim. This is analogous to Proposition~6.9 in~\cite{ViY13},
so we only sketch the arguments.

Let $b>0$ denote the volume of $B$ inside $D$. We say that an unstable disk $L$ crosses the $cs$-block if it
intersects every lamina once. Then $\Leb(L \cap \cB)\ge b/K$. Let $\cU$ be a small open neighborhood of $\cB$.
Obviously, $\Leb(L \cap \cU)\ge b/K$ and $\tilde\mu(\cU)\ge \tilde\mu(\cB)>0$.
Let $c=a\tilde\mu(\cU)$, where $a=\Phi_\mu(\{\tilde\mu\})>0$. Then $\mu(\cU)\ge c >0$.
Next, for each small $\delta>0$ let $J_\delta$ denote the $\delta$-neighborhood of $I_0$ inside the corresponding
unstable leaf which contains $I_0$. Define
$$
\mu^\delta_n=\frac{1}{n}\sum_{j=0}^{n-1}f_*^j(\Leb_{J_\delta})
$$
and let $\mu^\delta$ be an accumulation point along the same subsequence as for $\mu$.
It is clear that $\mu^\delta\to\mu$ as $\delta\to 0$, and so we may assume that $\mu^\delta(\cU)\ge c/2$.
Then, there exist $n$ arbitrarily large such that $f^{-n}(\cU)$ intersects $J_\delta$ on a subset with relative
measure larger than $c/4$.
By considering disjoint disks with bounded diameter inside $f^{n}(J_\delta)$ that cross the $cs$-block,
and using bounded distortion of the volume measure under backwards iterates along such disks, we conclude
that the relative volume of $J_\delta \cap f^{-n}(\cB)$ inside $J_\delta$ is bounded below by some constant $r>0$
that depends only on $c$ and on the distortion bound. In particular, $r$ is independent of $\delta$.
On the other hand,  the relative volume of $I_0$ inside $J_\delta$ goes to $1$ as $\delta$ goes to $0$.
Thus making $\delta\to 0$, we conclude that the relative volume of $I_0 \cap f^{-n}(\cB)$ inside $I_0$
is still bounded below by $r$. This implies the claim, obviously.
\end{proof}

Theorem~\ref{main.global} follows immediately from Lemma~\ref{l.finite} and Lemma~\ref{l.fullvolume}.

\section{Proof of Theorem~\ref{main.relation}}

Recall (from~\cite{BoV00,An10}) that a partially hyperbolic diffeomorphism $f$ has
\emph{mostly contracting center} if the center Lyapunov exponents of every ergodic Gibbs $u$-state
are all negative.

Let $\mu$ be any ergodic Gibbs $u$-state of $f$.
The entropy $h_\mu(f)$ of $\mu$ is larger than or equal to its conditional entropy along the unstable foliation.
Moreover, Ledrappier~\cite{Led84a} implies that for Gibbs $u$-states the latter is equal to
$\int \log|\det Df\mid_{E^u}| \, d\mu$, which is obviously positive. Thus,
\begin{equation}\label{eq.entropyofGibbsu}
h_\mu(f) \geq \int \log|\det Df\mid_{E^u}| d\mu > 0.
\end{equation}
That implies that the smallest center Lyapunov exponent of $\mu$ is strictly negative:
otherwise, all the exponents would be non-negative which, by the Ruelle inequality~\cite{Rue78}
applied to the inverse $f^{-1}$, would imply that $h_\mu(f)=0$.
In particular, this proves the theorem when the $u$-codimension is 1.

In the $u$-codimension 2 case, let $\lambda^c_1(\mu) \geq \lambda^c_2(\mu)$ be the center Lyapunov exponents.
We already know that $\lambda_2^c(\mu)<0$, so are left to proving that the same holds for $\lambda_1^c(\mu)$.
Suppose otherwise: $\lambda_1^c(\mu) \ge 0$.
Since $-\lambda_2^c(\mu)$ is the unique positive exponent of $\mu$ for $f^{-1}$,
the Ruelle inequality gives that $h_\mu(f)\leq -\lambda^c_2(\mu)$.
Combining this with \eqref{eq.entropyofGibbsu}, we get that
\begin{equation}\label{eq.entropyofGibbsu2}
\int \log |Df\mid_{E^u}| d\mu+\lambda^c_2(\mu) \leq 0.
\end{equation}
By the Oseledets theorem (see \cite[Chapter 4]{LLE}) the integral on the left hand side coincides with
the sum of all Lyapunov exponents along the unstable sub-bundle $E^u$.
Thus the left hand side of \eqref{eq.entropyofGibbsu2} is equal to the sum of all Lyapunov exponents
except for $\lambda_1^c(\mu)$, and so the inequality directly contradicts \eqref{eq.used2}.
This contradiction proves the theorem.

\section{Partially volume expanding attracting sets}\label{local}

As a first step, let us state a variation of Theorem~\ref{main.global} for attracting sets of
diffeomorphisms. Then we will check that it applies to the generalized solenoid on the solid
torus $M=S^1\times D$
$$
f_0: M \to M, \quad f_0(\theta,x)=\left(k\theta \mod 1, h_\theta(x)\right)
$$
presented in the Introduction. Since $\|Dh_\theta(x)\|$ and $\|Dh_\theta(x)^{-1}\|$ are taken
to be strictly less than $k$ at every point, we may find $a<1$ such that
\begin{equation}\label{eq_ak}
\frac{1}{ak} \|v\| \le \|Dh_\theta(x) v\| \le a k \|v\|
\text{ for any } v \in T_x D \text{ and } (\theta,x) \in M.
\end{equation}
Let us point out that the dynamics of this embedding can be very complicated, for
instance, it may exhibit infinitely many coexisting periodic repellers (see~\cite{BLY13}).

\subsection{A semi-local finiteness theorem}

Let $f:M\to \inte(M)$ be a $C^{1+}$ embedding of a compact manifold with boundary to its interior.
By the \emph{attracting} set we mean the maximal invariant set
$\Lambda_f = \cap_{n=0}^\infty f^n(M)$.
Observe that the map $f\mapsto\Lambda_f$ is upper semi-continuous with respect to the uniform
topology on $f$ and the Hausdorff topology on the space of compact subsets of $M$.

We call $f$ \emph{partially hyperbolic} if $\Lambda_f$ is a partially hyperbolic set.
A useful equivalent condition (see~\cite{Yoc95}) is that there exist two continuous
families $\cC^{u}$ and $\cC^{cs}$ of closed cones in the tangent space satisfying:
\begin{itemize}
\item[(a)] $\cC^{u}(x)\cap \cC^{cs}(x)=\{0\}$ for any $x\in M$;
\item[(b)] $Df(x)(\cC^u(x))\subset \cC^u(f(x))$ and $Df(x)(\cC^{cs}(x))\supset \cC^{cs}(f(x))$
for any $x\in M$;
\item[(c)] $\|Df(x)u\|>\|u\|$ for any non-zero vector $u \in \cC^u(x)$ and any $x$ close to $\Lambda_f$.
\end{itemize}

Similarly, we call $f$ \emph{partially volume expanding} if $\Lambda_f$ is a partially volume
expanding set, meaning that $\log\left|\det Df(x)_{\mid H} \right|$ is positive for any
$x\in\Lambda_f$ and any hyperplane $H$ of the tangent space $T_x M$ containing the unstable
subspace $E_x^u$. Using the upper semi-continuity of $\Lambda_f$ one can easily that this is a
stable property, that is, it remains true for every $C^1$ perturbation of $f$.
Moreover, the consequences \eqref{eq.used3} and \eqref{eq.used2} still hold in this context.

\begin{theorem}\label{main.local}
Any partially volume expanding $C^{1+}$ embedding $f:M\to\inte(M)$ admits finitely many physical
measures, the union of whose basins is a full volume subset of the ambient manifold.
\end{theorem}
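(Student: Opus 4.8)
The plan is to follow the proof of Theorem~\ref{main.global} essentially line by line, the only real work being to verify that each ingredient survives the passage from a diffeomorphism $f:M\to M$ to an embedding $f:M\to\inte(M)$ with attracting set $\Lambda_f$. The first ingredient, Theorem~\ref{t.limit}, already comes for free: it is stated precisely for embeddings $f:U\to\inte(U)$, so with $U=M$ it hands us a full volume subset $\Gamma\subseteq M$ on which every empirical measure $\nu_x$ satisfies $\int\log|\det Df|\,d\nu_x\le 0$. As the excerpt notes, the consequences \eqref{eq.used3} and \eqref{eq.used2} of partial volume expansion (the uniform constant $c>0$ and the bound $\Delta(x)-\lambda_i(x)\ge c$ on center exponents) remain valid on $\Lambda_f$. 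Consequently Lemma~\ref{l.simple_fact1} transfers verbatim: every ergodic Gibbs $u$-state $\tilde\mu$ with $\int\log|\det Df|\,d\tilde\mu\le 0$ is a physical measure whose center exponents are all below $-c$.

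The second ingredient is the Gibbs $u$-state machinery. For partially hyperbolic attracting sets this theory is classical (see \cite{PS82,BoV00} and \cite[Chapter~11]{Beyond}), and Proposition~\ref{p.Gibbsustates} holds with the same proofs, the measures now being supported on $\Lambda_f$. The one point that genuinely requires attention is the conclusion of Remark~\ref{r.Gibbsustates}, namely the existence of a \emph{full volume} set of ambient points whose empirical measures are Gibbs $u$-states. In the present setting $\Lambda_f$ --- hence the support of every physical measure --- typically has zero Lebesgue measure in $M$, so one cannot foliate a positive volume subset of $M$ by genuine unstable leaves. I would instead fix a smooth foliation of $M$ by disks tangent to the unstable cone field $\cC^u$. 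By the cone invariance (b) and the expansion (c), forward iterates of such a disk stay tangent to $\cC^u$, expand, and accumulate on genuine unstable leaves inside $\Lambda_f$; using $C^{1+}$ regularity and domination to control distortion, the averages $n^{-1}\sum_{j=0}^{n-1}(f^j)_*(\Leb_L)$ converge to Gibbs $u$-states exactly as in part~(4), and for leaf-almost every point the empirical measure is a Gibbs $u$-state as in part~(3). Since the auxiliary foliation is smooth, absolute continuity is automatic, and Fubini upgrades this to a full volume subset of $M$.

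With these two ingredients in hand the remainder is a direct transcription. Corollary~\ref{c.physicalmeasure} follows by intersecting $\Gamma$ with the set just produced; Lemma~\ref{l.simple_fact2} (isolation of hyperbolic ergodic Gibbs $u$-states via absolute continuity of the stable lamination) and Lemma~\ref{l.finite} (finiteness) use only the Gibbs $u$-state structure and \eqref{eq.used2}, and so go through unchanged. For Lemma~\ref{l.fullvolume} the $cs$-block construction from Pesin theory lives inside $\Lambda_f$ and is unaffected, while the final pushforward-volume argument simply uses cone-tangent disks crossing the $cs$-block in place of genuine unstable disks, with the same bounded-distortion estimates. Combining the adapted Lemma~\ref{l.finite} and Lemma~\ref{l.fullvolume} yields the theorem.

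I expect the main obstacle to be precisely the full volume statement discussed in the second paragraph: because the attractor is volume-null, the ambient conclusion cannot be read off from a global unstable foliation, and one must instead justify that the Gibbs $u$-state properties (3) and (4), together with the crossing and distortion estimates of Lemma~\ref{l.fullvolume}, persist for disks merely tangent to the unstable cone field rather than for genuine unstable leaves. Everything else is bookkeeping.
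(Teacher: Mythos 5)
Your proposal is correct and follows essentially the same route as the paper: the paper's (very brief) proof likewise reuses Section~\ref{physical}, which was already stated for embeddings, and extends Proposition~\ref{p.Gibbsustates} together with the arguments of Section~\ref{s.Theorem A} by redefining a $u$-disk to be any disk embedded in $\inte(M)$ whose tangent space is contained in the unstable cone at every point --- precisely the cone-tangent disks you introduce to handle the fact that $\Lambda_f$ is volume-null. Your second paragraph merely spells out in greater detail the adaptation that the paper compresses into this single convention.
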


The proof is virtually identical to that of Theorem~\ref{main.global}.
Section~\ref{physical} was already formulated in the language of embeddings, and so it applies
immediately to the present context.
Gibbs $u$-states on the invariant set $\Lambda_f$ are defined in precisely the same way as in the
previous global setting. We denote by $\Gibb^u(f)$ the space of Gibbs $u$-states of $\Lambda_f$.
Proposition~\ref{p.Gibbsustates} remains valid here, as long as we define a $u$-disk to mean any
disk embedded in $\inte(M)$ whose tangent space is contained in the unstable cone at every point.
With that same convention, the arguments in Section~\ref{s.Theorem A} also extend immediately to
this setting. Thus one gets Theorem~\ref{main.local}.

\subsection{A partially volume expanding solenoid}

The final next couple of lemmas assert that generalized solenoid $f_0:M \to M$ is partially
hyperbolic and partially volume expanding. Thus Theorem~\ref{main.local} applies to it and all its
perturbations.

\begin{lemma}\label{l.solenoidpartiallyhyperbolic}
$f_0$ is a partially hyperbolic diffeomorphism.
\end{lemma}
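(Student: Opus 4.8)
The plan is to verify the cone–criterion characterization (a)--(c) of partial hyperbolicity recalled above, with the unstable cone wrapped around the circle direction and the center–stable cone around the disk direction. Writing a tangent vector at $(\theta,x)$ as $v=(v_\theta,v_x)\in T_\theta S^1\oplus T_xD$, the derivative has the block form
$$
Df_0(\theta,x)=\begin{pmatrix} k & 0\\ \partial_\theta h_\theta(x) & Dh_\theta(x)\end{pmatrix},
$$
so the circle direction is expanded by the constant factor $k$, while the disk block $Dh_\theta(x)$ is pinched between $1/(ak)$ and $ak$ by \eqref{eq_ak}. I would set
$$
\cC^u(\theta,x)=\{(v_\theta,v_x): \|v_x\|\le \alpha|v_\theta|\},\qquad
\cC^{cs}(\theta,x)=\{(v_\theta,v_x): |v_\theta|\le \beta\|v_x\|\},
$$
and fix the widths in the order $\alpha$ first, then $\beta$ small, at the very end.

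For condition (b), forward invariance of $\cC^u$ reduces to the estimate $\|\partial_\theta h\|\,|v_\theta|+ak\alpha|v_\theta|\le \alpha k|v_\theta|$, which holds as soon as $\alpha\ge \sup\|\partial_\theta h\|/(k(1-a))$; the strict inequality $a<1$ is exactly what creates the room here. Dually, backward invariance of $\cC^{cs}$ is checked on the inverse block matrix, whose lower–right block is $(Dh)^{-1}$ of norm at most $ak$ and whose lower–left block is $-\tfrac1k (Dh)^{-1}\partial_\theta h$; a short computation shows $Df_0^{-1}$ carries $\cC^{cs}$ strictly inside itself once $\beta$ is small, again thanks to $a<1$. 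Condition (a), $\cC^u\cap\cC^{cs}=\{0\}$, is then immediate from $\alpha\beta<1$, which we guarantee by shrinking $\beta$.

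The remaining point is the expansion condition (c). For any $v\in\cC^u$ the image has $\theta$–component of size $k|v_\theta|$, so $\|Df_0 v\|\ge k|v_\theta|\ge (k/\sqrt{1+\alpha^2})\,\|v\|$; combined with domination this yields genuine expansion on the forward–invariant cone after passing to an adapted metric (equivalently, to a fixed iterate), since the expansion rate along $E^u$ telescopes to $k>1$ while the norm of the invariant section stays bounded by $\alpha$. The cleanest and most essential estimate, on which the whole splitting rests, is the domination ratio: because $\|(Df_0\mid_{E^u})^{-1}\|\approx 1/k$ and $\|Df_0\mid_{E^{cs}}\|\le ak$, their product is $\approx a<1$.

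I expect the main obstacle to be book-keeping the two cone widths consistently — $\alpha$ must be large enough for $\cC^u$ to be forward invariant, yet the one–step metric expansion in (c) is cleanest when $\alpha$ is not too large — rather than any deep difficulty. The slack $a<1$ in \eqref{eq_ak}, together with $k\ge 3$, resolves every inequality, and it is precisely this slack that simultaneously secures the domination $\|(Df_0\mid_{E^u})^{-1}\|\,\|Df_0\mid_{E^{cs}}\|<1$ demanded by \eqref{eq.dfpartially}.
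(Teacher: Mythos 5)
Your proposal is correct and takes essentially the same approach as the paper: the paper also verifies the cone criterion (a)--(c) with a horizontal unstable cone of width $2K/(k(1-a))$ (where $K=\max\|\partial_\theta h_\theta\|$) and a vertical center-stable cone, and simply declares the verification straightforward. If anything, your version is the more careful one, since you keep the center-stable cone width $\beta$ as a free small parameter and explicitly handle the one-step expansion in (c) via an adapted metric or an iterate, points that the paper's fixed constants leave implicit.
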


\begin{proof}
Take $a$ as in \eqref{eq_ak} and
$$
K=\max\left\{\left\|\frac{\partial h_\theta}{\partial \theta}(\theta,x)\right\|: (\theta,x)\in M \right\},
$$
and then define
\begin{equation*}
\begin{aligned}
\cC^u(\theta,x)
& = \left\{(u,v)\in T_{(\theta,x)} M: \|v\| \leq \frac{2K}{k(1-a)}\|u\|\right\}\\
\cC^{cs}(\theta,x)
& = \left\{(u,v)\in T_{(\theta,x)} M:  \|v\| \geq \frac{3K}{k(1-a)}\|u\|\right\}.
\end{aligned}
\end{equation*}
It is straightforward to check that these two cone fields satisfy the conditions in the definition above.
\end{proof}

Observe that the two-dimensional vertical sub-bundle $\{0\} \times T_x D$ is invariant under $Df$.
Indeed, it coincides with the center-stable sub-bundle $E^{cs}_{(\theta,x)}$ of $f$.

\begin{lemma}\label{l.solenoidweakcontraction}
There is an integer $N \ge 1$ such that $f_0^N$ is partially volume expanding.
\end{lemma}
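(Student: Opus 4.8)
The plan is to reduce the claim to two uniform one-dimensional estimates—one for the expansion along the unstable bundle $E^u$, one for the worst possible contraction inside the center-stable bundle $E^{cs}$—and then let the gap between the rates $k$ and $ak$ accumulate under iteration. Recall that $\dim M=3$, that $E^u$ is one-dimensional and contained in the cone field $\cC^u$ built in the proof of Lemma~\ref{l.solenoidpartiallyhyperbolic}, and that $E^{cs}$ is exactly the vertical bundle $\{0\}\times T_xD\subset\cC^{cs}$, on which $Df_0$ acts by $D_xh_\theta$. The first thing I would observe is that every hyperplane $H\supset E^u$ splits as $H=E^u\oplus L$ with $L:=H\cap E^{cs}$ one-dimensional: indeed $\dim H+\dim E^{cs}-\dim T_xM=1$ and $E^u\cap E^{cs}=\{0\}$. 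Since the splitting $E^u\oplus E^{cs}$ is $Df_0$-invariant, writing $u,\ell$ for unit vectors spanning $E^u$ and $L$ gives
\[
\left|\det Df_0^N(x)_{\mid H}\right|
=\frac{\|Df_0^N u\wedge Df_0^N\ell\|}{\|u\wedge\ell\|}
=\frac{\|Df_0^Nu\|\,\|Df_0^N\ell\|\,\sin\angle(Df_0^Nu,Df_0^N\ell)}{\sin\angle(u,\ell)}.
\]

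Next I would bound the three factors uniformly. For the angle in the numerator, invariance of the splitting gives $Df_0^Nu\in E^u\subset\cC^u$ and $Df_0^N\ell\in E^{cs}\subset\cC^{cs}$, so condition~(a) together with compactness of the unit sphere yields $\sin\angle(Df_0^Nu,Df_0^N\ell)\ge\theta_0$ for some $\theta_0>0$ independent of $x,H,N$; the denominator is at most $1$. For the expansion along $E^u$, the key point is that the $\theta$-component of $Df_0$ is multiplied by exactly $k$ at each step, while membership in $\cC^u$ forces $|u_\theta|\ge c_1\|u\|$ with $c_1=(1+(2K/(k(1-a)))^2)^{-1/2}$; hence $\|Df_0^Nu\|\ge|k^Nu_\theta|\ge c_1k^N$. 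For the contraction along $E^{cs}$, the vertical vectors evolve purely under $D_xh_\theta$, so the lower bound in \eqref{eq_ak} applied $N$ times yields $\|Df_0^N\ell\|\ge(ak)^{-N}$ for every unit $\ell\in E^{cs}$—this is the worst case, and, crucially, it is uniform over all one-dimensional $L\subset E^{cs}$, so I never have to single out the extremal hyperplane.

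Combining the three estimates gives
\[
\left|\det Df_0^N(x)_{\mid H}\right|\ \ge\ \theta_0\,c_1\,k^N(ak)^{-N}\ =\ \theta_0\,c_1\,a^{-N},
\]
uniformly in $x\in\Lambda_{f_0}$ and in the hyperplane $H\supset E^u_x$. Because $a<1$, the right-hand side tends to $+\infty$, so any $N$ with $\theta_0c_1a^{-N}>1$ makes $\log\left|\det Df_0^N(x)_{\mid H}\right|>0$ for all such $x$ and $H$, which is precisely the partial volume expansion of $f_0^N$.

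The one genuinely non-cosmetic point—and the reason the statement is phrased for a power $f_0^N$ rather than for $f_0$ itself—is that the fixed geometric constant $\theta_0c_1$ coming from the cone angles and the opening of $\cC^u$ may well be smaller than $1$, so a single iterate need not expand area on the worst hyperplane; only the exponential factor $a^{-N}$ eventually overwhelms this loss. I would therefore expect the bulk of the write-up to be the careful verification that $\theta_0$ and $c_1$ are strictly positive and independent of $N$, everything else reducing to the short calculation above.
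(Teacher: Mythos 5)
Your proposal is correct and follows essentially the same route as the paper's proof: both decompose the hyperplane $H\supset E^u$ as $E^u$ plus the line $F=H\cap E^{cs}$ in the vertical plane, bound the growth along $E^u$ by $k^N$ (via the cone field and the exact factor $k$ in the base), bound the growth along $F$ below by $(ak)^{-N}$ using \eqref{eq_ak}, and use uniform transversality of the cones to conclude the determinant grows like $a^{-N}$, so a large power suffices. Your write-up merely makes explicit the constants ($\theta_0$, $c_1$) and the wedge-product formula that the paper leaves implicit.
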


\begin{proof}
Let $H$ be any 2-plane that contains the unstable subspace $E^u_{(\theta,x)}$.
Of course, $H$ must intersect the vertical 2-plane $\{0\} \times T_x D$ along some direction $F$.
The iterates of vectors along $E^u_{(\theta,x)}$ remain inside the unstable cone
$\cC^u$ and so their angle to the horizontal direction is uniformly bounded from
$\pi/2$. In particular, their growth rate under iteration is equal to $k$.
For the vectors along $F$, the growth rate is given by the vertical derivative.
In particular, using \eqref{eq_ak}, it is bounded below by $1/(ak)$.
Since the angle between the iterates of $E^u_{(\theta,x)}$ and $F$ are uniformly bounded from zero,
it follows that the rate of growth of the determinant along the 2-plane $H$ is
bounded below by $1/a>1$. Thus, there exists $N \ge 1$ such that
$|\det Df^N(\theta,x)|_{H}$ is strictly
bigger than $1$ at every point of $\Lambda_f$. This proves the claim.
\end{proof}

\end{document}